\title{A Distributive Lattice Cover for Semilattices}
\author{Colin G. Bailey}
\address{School of Mathematics, Statistics and Operations Research\\
Victoria University of Wellington\\
Wellington, New Zealand\\
}
\email{Colin.Bailey@vuw.ac.nz}
\author{Joseph S.Oliveira}
\address{
Pacific Northwest National Laboratories\\
Richland,  WA\\
U.S.A.}
\email{Joseph.Oliveira@pnl.gov}
\subjclass{06A12, 06D05}
\keywords{semilattice, distributive lattice,   valuation}
\date{\today}
\let\rsf\mathscr
\newcommand{\card}[1]{\left| #1\right|}
\def\one{\mathbf1}
\def\zero{\mathbf0}
\providecommand{\meet}{\mathbin{\wedge}}
\providecommand{\join}{\mathbin{\vee}}
     \def\restrict{\hbox{\rm\kern0.166em\accent"12\kern-0.536em$\vert$\kern0.3em}}%
     \def\restrict{\upharpoonright}%
\def\twoSet#1#2{\left\{%
\vphantom{#2}#1\thinspace\right|\nolinebreak[3]\left.%
  #2%
  \vphantom{#1}%
  \right\}%
}
\def\oneSet#1{\left\lbrace#1\right\rbrace}
\newif\if@nstr
\def\setstrfalse{\let\if@nstr=\iffalse}
\def\setstrtrue{\let\if@nstr=\iftrue}
\def\@nstr #1#2{
\def\@@nstr ##1#1##2##3\@@nstr{\ifx
\@nstr ##2\setstrfalse \else \setstrtrue \fi }
\@@nstr #2#1\@nstr \@@nstr}
\def\@separate#1|#2@{\setFront{#1}\setBack{#2}}
\def\lb#1\rb{\@nstr|{#1} \if@nstr \@separate#1 @ \twoSet{\@setFront}{\@setBack}%
\else \@separate |{#1 }@ \oneSet{\@setBack}\fi%
}
\def\setFront#1{\def\@setFront{#1}}
\def\setBack#1{\def\@setBack{#1}}
\def\Set#1{\lb{#1}\rb}
\def\oneBrk#1{\left\langle#1\right\rangle}
\def\twoBrk#1#2{\left\langle%
\vphantom{#2}#1\thinspace\right|\nolinebreak[3]\left.%
  #2%
  \vphantom{#1}%
  \right\rangle%
}
\def\brk<#1>{\@nstr|{#1} \if@nstr \@separate#1 @ \twoBrk{\@setFront}{\@setBack}%
\else \@separate |{#1 }@ \oneBrk{\@setBack}\fi%
}
\theoremstyle{plain}
\newtheorem{thm}{Theorem}[section]
\newtheorem{lem}[thm]{Lemma}
\newtheorem{cor}[thm]{Corollary}
\newtheorem{prop}[thm]{Proposition}
\newtheorem{defn}[thm]{Definition}
\theoremstyle{remark}
{}
{}
{}
{\newtheorem{rem}{Remark}[section]}
\newcommand{\Z}{{\mathbb{Z}}}
\begin{document}
    \begin{abstract}
        We consider two constructions of an envelope for a finite locally 
	distributive strong upper semilattice. The first is based on 
	Birkhoff's representation of finite distributive lattices and 
	the second on valuations on lattices. We show that these 
	produce isomorphic envelopes.
    \end{abstract}
\maketitle
\section{Introduction}

There are many interesting semilattices that also have meets whenever 
lower bounds exist -- see \cite{Abb,Abb:bk,Chajda:2005p59} for some 
implication based examples. For convenience we refer to these as strong upper 
semilattices. Thus these structures have the property that every 
interval $[a, b]$ is a lattice,  and if $[a, b]\subseteq[c, d]$ then 
$[a, b]$ is a sublattice of $[c, d]$. 

We consider those strong semilattices in which every interval is 
distributive. In this paper we concentrate primarily on finite 
examples and show that two well-known constructions lead to an 
\emph{envelope},  ie a distributive lattice in which our original 
structure embeds as an upwards-closed sub-semilattice. 

The first construction is based upon the Birkhoff representation 
theorem for finite distributive lattices,  and the second is based 
upon the valuation ring of a lattice. We show that these 
two constructions lead to the same (up to isomorphism) envelope. 


\begin{defn}
    A \emph{strong upper semilattice} is an upper semilattice with 
    one such that any two elements with a lower bound have a greatest 
    lower bound. As usual we will denote this glb by $x\meet y$. 
\end{defn}

\begin{defn}
    A \emph{distributive} strong upper semilattice is a strong upper 
    semilattice in which every interval $[a, b]$ is a distributive 
    lattice.
\end{defn}

\section{The Birkhoff Construction}
Throughout this section 
we assume that $\mathcal L$ is a \textbf{finite} distributive
strong upper semilattice. 
We intend to use the meet-irreducibles of $\mathcal L$ to get a 
distributive lattice as is done in Birkhoff's representation theorem 
for finite distributive lattices. Then we show that our original 
semilattice embeds into this new lattice as an upwards-closed 
subalgebra.

The first part of this proof verifies that the usual machinery for 
representation by meet-irreducibles works in our setting.  

Let 
$\rsf M$ be the set of meet-irreducibles of $\mathcal L$.
and if $x\in\mathcal L$ then $\rsf M^{x}=\Set{m\in\rsf M | x\le m}$.

Let $A=\Set{a\in\mathcal L | a\text{ is minimal}}$.

\begin{lem}\label{lem:wedgeMI}
    $$
	x=\bigwedge\rsf M^{x}.
    $$
\end{lem}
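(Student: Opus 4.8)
The plan is to show the two inequalities $x \le \bigwedge \rsf M^x$ and $x \ge \bigwedge \rsf M^x$ separately. The first is immediate: every $m \in \rsf M^x$ satisfies $x \le m$ by definition, so $x$ is a lower bound for $\rsf M^x$, and since $\rsf M^x$ is a nonempty (we should check this) subset of the interval $[x,\one]$ — which is a lattice — the meet $\bigwedge \rsf M^x$ exists there and dominates $x$. Nonemptiness holds because $\one$ is meet-irreducible (vacuously, or by the usual convention) and $x \le \one$, so $\one \in \rsf M^x$; alternatively, if $x \ne \one$ we can find a meet-irreducible above $x$ by the argument below.

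The substance is the reverse inequality. First I would observe that everything takes place inside the interval $[x, \one]$, which is a \emph{finite distributive lattice} by hypothesis. So it suffices to prove the statement for a finite distributive lattice $L = [x,\one]$ with bottom $x$: namely that $x$ is the meet of all meet-irreducibles of $L$ that lie above $x$ — but that is just every meet-irreducible of $L$. However there is a subtlety: a meet-irreducible of the interval $[x,\one]$ need not be a meet-irreducible of $\mathcal L$, so I cannot directly quote Birkhoff for $[x,\one]$ and be done. Instead I would argue directly. Suppose $y = \bigwedge \rsf M^x > x$. Since $[x,y]$ is a finite lattice with more than one element, pick an element $z$ that is maximal in $[x,y)$, i.e.\ $z \lessdot y$ (a lower cover of $y$, with $z \ge x$). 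The claim is then that there is a meet-irreducible $m$ of $\mathcal L$ with $x \le z \le m$ but $y \not\le m$; this $m$ would lie in $\rsf M^x$ yet fail $y \le m$, contradicting $y = \bigwedge \rsf M^x \le m$.

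To produce that $m$: consider the set $S = \Set{w \in \mathcal L | z \le w,\ y \not\le w}$, which is nonempty since $z \in S$. Because $\mathcal L$ is finite, $S$ has a maximal element $m$. I claim $m$ is meet-irreducible. If not, $m = a \meet b$ with $m < a$ and $m < b$; since $z \le m \le a, b$, the elements $a$ and $b$ have the common lower bound $z$, so (strong semilattice!) $m = a\meet b$ really is the glb, and by maximality of $m$ in $S$ both $a \notin S$ and $b \notin S$, forcing $y \le a$ and $y \le b$, hence $y \le a \meet b = m$ — contradicting $y \not\le m$. So $m$ is meet-irreducible, $x \le z \le m$ gives $m \in \rsf M^x$, and $y \not\le m$ gives the desired contradiction. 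The one point needing care — and the main obstacle — is making sure the distributivity / strongness hypotheses are invoked correctly when manipulating meets of elements that a priori only live in the semilattice, not in a fixed interval; in particular the step "$a, b$ have a common lower bound so $a \meet b$ is their genuine glb and the usual meet-irreducibility reasoning applies" is where the strong-semilattice axiom does the work, and it should be stated explicitly. Distributivity of the intervals is what will later let the assignment $x \mapsto \rsf M^x$ be a lattice embedding, but for this lemma finiteness plus strongness suffice.
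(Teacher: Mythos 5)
Your proof is correct, but it takes a genuinely different route from the paper's. The paper argues by induction on corank: elements of corank one are meet-irreducible (base case), and in the inductive step, if $x<\bigwedge\rsf M^{x}$ and $x=p\meet q$ with $p,q>x$, then applying the induction hypothesis to $p$ and $q$ gives $x=p\meet q=\bigwedge(\rsf M^{p}\cup\rsf M^{q})\geq\bigwedge\rsf M^{x}>x$, a contradiction, so $x$ must itself be meet-irreducible and hence lies in $\rsf M^{x}$. You instead run the classical separation argument: assuming $y=\bigwedge\rsf M^{x}>x$, a maximal element $m$ of $\Set{w | z\le w,\ y\not\le w}$ is meet-irreducible, belongs to $\rsf M^{x}$, and fails $y\le m$. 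Both arguments are sound and both lean on the same two facts (finiteness, and the strong-semilattice axiom guaranteeing that $a\meet b$ is a genuine glb when a common lower bound exists); you are right to flag the latter explicitly, since that is exactly where the hypothesis does its work, and it is used tacitly in the paper's step $p\meet q=\bigwedge(\rsf M^{p}\cup\rsf M^{q})$ as well. Your version buys a self-contained argument that needs no notion of corank and in fact proves the stronger separation statement (any $y\not\le w$ with $w\ge x$ is excluded by some meet-irreducible above $w$); the paper's induction is shorter. Two small remarks: the lower cover $z$ is superfluous --- taking $z=x$ directly in the definition of $S$ gives the same contradiction --- and the question of whether $\one\in\rsf M$ (so that $\rsf M^{\one}\neq\emptyset$) is a convention issue shared by both proofs, which you at least acknowledge while the paper does not.
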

\begin{proof}
    By induction on co-rank.
    Everything of corank one is in $\rsf M$ and so the result is clear.
    
    Obviously $x\le\bigwedge\rsf M^{x}=s$. Suppose that $x<s$. Now if 
    $x=p\meet q$ for any $x< p, q$ in $\mathcal L$ then 
    $p=\bigwedge\rsf M^{p}$ and $q=\bigwedge\rsf M^{q}$ and $\rsf 
    M^{p}\cup\rsf M^{q}\subseteq\rsf M^{x}$ so that 
    $x= p\meet q= \bigwedge(\rsf M^{p}\cup\rsf 
    M^{q})\geq\bigwedge\rsf M^{x}>x$ -- contradiction. Hence $x$ must 
    be in $\rsf M$ and so the result is clear. 
\end{proof}

\begin{lem}
    Let $x\in\rsf M$ and $x^{+}=\bigwedge\Set{y\in\rsf M | x<y}$. Then
    \begin{enumerate}[i.]
	\item $x<x^{+}$; 
    
	\item if $z\in\rsf M$ is not equal to $x$ then $x\join 
	z=x^{+}\join z$.
	
	\item if $z\in\rsf M$ is not equal to $x$ then $x\join 
	z=x^{+}\join z^{+}$.
    \end{enumerate}
\end{lem}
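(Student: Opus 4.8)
The whole lemma reduces to \lemref{lem:wedgeMI} together with one structural fact: a non-maximal meet-irreducible of $\mathcal L$ has a unique upper cover. Write $S=\{\,y\in\rsf M\mid x<y\,\}$, so $x^{+}=\bigwedge S$; this meet exists because $x$ bounds $S$ below, and $x^{+}=\one$ when $S=\emptyset$. One caveat that shapes the proof: parts (ii) and (iii) actually use, respectively, that $z\not\le x$ and that $x,z$ are incomparable -- a three-element chain shows these strengthenings of the stated hypothesis $z\neq x$ are needed -- and I also take $x$ (and $z$, in (iii)) to be strictly below $\one$.

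For (i): if $S=\emptyset$ then $x^{+}=\one>x$. Otherwise, since $x$ is meet-irreducible it has a unique cover $x^{*}$: if $u_{1}\neq u_{2}$ both covered $x$, they would be incomparable, so (the meet existing as $x$ bounds them below) $x\le u_{1}\meet u_{2}<u_{1}$, forcing $u_{1}\meet u_{2}=x$ because $u_{1}$ covers $x$ -- against irreducibility. Every $y\in S$ has $y>x$, hence $y\ge x^{*}$, so $x^{+}=\bigwedge S\ge x^{*}>x$. (Cover-free variant: if $\bigwedge S=x$, list $S=\{s_{1},\dots,s_{k}\}$ and take $j$ least with $s_{1}\meet\dots\meet s_{j}=x$; then $x=(s_{1}\meet\dots\meet s_{j-1})\meet s_{j}$ displays $x$ as a meet of two elements both strictly above it.)

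For (ii): $x\join z\le x^{+}\join z$ follows from $x\le x^{+}$, so the content is $x^{+}\le x\join z$. Since $z\not\le x$ we have $x<x\join z$. Apply \lemref{lem:wedgeMI} to $x\join z$: any $m\in\rsf M$ with $m\ge x\join z$ satisfies $m\ge x\join z>x$, so $m\in S$ and hence $x^{+}=\bigwedge S\le m$; taking the meet over all such $m$ gives $x^{+}\le\bigwedge\rsf M^{x\join z}=x\join z$. (If $\rsf M^{x\join z}=\emptyset$, then $x\join z=\one$ by \lemref{lem:wedgeMI} and there is nothing to show.)

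For (iii): by (ii) and its mirror (legitimate since $x,z$ are incomparable), $x\join z=x^{+}\join z$ and $x\join z=x\join z^{+}$; joining the first with $z^{+}$ and using $z<z^{+}$ from (i) gives $x\join z^{+}=x^{+}\join z^{+}$, and chaining the three equalities yields $x\join z=x^{+}\join z^{+}$. I expect the only genuine work to be in part~(ii) -- seeing that \lemref{lem:wedgeMI} is exactly the device that converts ``$x^{+}$ lies below every meet-irreducible above $x\join z$'' into ``$x^{+}$ lies below $x\join z$''; parts (i) and (iii) are then short.
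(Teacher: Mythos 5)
Your proof is correct and follows essentially the same route as the paper's: part (i) is read off from meet-irreducibility, part (ii) applies \lemref{lem:wedgeMI} to $x\join z$ after observing that every meet-irreducible above $x\join z$ lies above $x^{+}$, and part (iii) absorbs $x\le x^{+}$ and $z\le z^{+}$ into joins of the identities from (ii). Your caveat that the hypothesis $z\neq x$ must really be $z\not\le x$ (resp.\ incomparability in (iii)) is well taken --- the paper's own argument for (ii) tacitly uses that $y\in\rsf M^{x\join z}$ implies $y>x$, which fails when $z<x$, and a three-element chain does falsify the statement as written.
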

\begin{proof}
    \begin{enumerate}[i.]
	\item This is immediate as $x$ is meet-irreducible.
    
	\item If $z\in\rsf M^{x}$ then $z\geq x^{+}$ and both sides 
	give $z$. So we may assume that $z\notin\rsf M^{x}$.
	
	Obviously $x\join z\le x^{+}\join z$. If $y\in\rsf M^{x\join z}$
    then $y>x$ and so $y\geq x^{+}$. Hence $y\geq x^{+}\join z$. 
    Thus $x\join z= \bigwedge\rsf M^{x\join z= \bigwedge\rsf 
    M^{x^{+}\join z}}= x^{+}\join z$. 
    
	\item $x\not=z$ gives $x\join z= x^{+}\join z= x\join z^{+}$. 
	Hence 
	$x^{+}\join z^{+}= (x\join x^{+})\join (z\join z^{+})= 
	(x\join z^{+})\join(x^{+}\join z)= x\join z$.
    \end{enumerate}    
\end{proof}

\subsection{Using Filters}

Now for the heart of the proof -- we let $\mathcal D=\mathcal D(\rsf M)$ be the set of order 
filters on $\brk<\rsf M, \le>$ with reverse inclusion -- so that 
$\Set{\one}$ is the greatest element and $\rsf M=\zero$ is the least. 
Union and intersection give meet and join. 

We define $\nu\colon\mathcal L\to\mathcal D$ by 
$$
    \nu(x)=\rsf M^{x}.
$$
This preserves order as $p\geq x\geq y$ and $p\in\rsf M$ implies 
$p\in\rsf M^{y}$,  ie $\rsf M^{x}\subseteq\rsf M^{y}$.

It is one-one as $x=\bigwedge\rsf M^{x}$.

It preserves join as $\rsf M^{x\join y}\subseteq\rsf M^{x}\cap\rsf 
M^{y}$ -- by the above. And if $p\geq x$ and $p\geq y$ then $p\geq 
x\join y$,  so we have equality. 

It preserves all meets that exist. Clearly we have $\rsf 
M^{x}\cup\rsf M^{y}\subseteq\rsf M^{x\meet y}$. 
And if $p\geq x\meet y$ is meet-irreducible, 
then $p=(p\join x)\meet(p\join y)= \bigwedge\rsf M^{p\join 
x}\meet\bigwedge\rsf M^{p\join y}$. If $p$ is in neither set then 
$\rsf M^{p\join x}\cup\rsf M^{p\join y}\subseteq\rsf M^{>p}$ and we 
know that $\bigwedge\rsf M^{>p}>p$ -- giving a  contradiction. Hence 
$p$ is in one of these two sets,  ie $p\geq x$ or $p\geq y$. 

We can extend these ideas, a little, to elements of $\mathcal D$. 
\begin{defn}
    Let $F\in\mathcal D$ and $a\in A$. Then 
    $f_{a}=\bigwedge(F\cap[a, \one])$.
\end{defn}

\begin{lem}
    Let $p\in\rsf M$, $F\in\mathcal D$ and $p\geq f_{a}(F)$. Then 
    $p\in F$.
\end{lem}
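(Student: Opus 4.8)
The plan is to identify $\nu\bigl(f_{a}(F)\bigr)$ exactly and read the conclusion off it. Write $S=F\cap[a,\one]$. This is a finite subset of $\rsf M$, and it is nonempty: the top $\one$ of $\mathcal L$ is meet-irreducible (if $\one=x\meet y$ then $x=y=\one$), so $\one$ is the greatest element of $\rsf M$ and lies in every nonempty order filter — which is in fact why $\{\one\}$ is the top of $\mathcal D$ — while $\one\in[a,\one]$ trivially. All elements of $S$ lie above $a$, so they have a common lower bound, the meet $f_{a}(F)=\bigwedge S$ exists in $\mathcal L$, and $f_{a}(F)\geq a$.

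Now I would invoke the fact established above that $\nu$ preserves every meet that exists. Iterating the binary case over the finite set $S$ (each partial meet still lies above $a$, so every step is legitimate), and recalling that meet in $\mathcal D$ is union, I get
$$
\nu\bigl(f_{a}(F)\bigr)=\nu\Bigl(\bigwedge S\Bigr)=\bigcup_{m\in S}\nu(m)=\bigcup_{m\in S}\rsf M^{m}.
$$
A meet-irreducible $q$ lies in this union precisely when $q\geq m$ for some $m\in S$. Since $S\subseteq F$ and $F$ is an order filter of $\langle\rsf M,\le\rangle$, such a $q$ already lies in $F$; and $q\geq m\geq a$ puts $q$ in $[a,\one]$. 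Hence $\bigcup_{m\in S}\rsf M^{m}\subseteq F\cap[a,\one]=S$, the reverse inclusion being immediate from $m\in\rsf M^{m}$. Thus $\rsf M^{f_{a}(F)}=\nu\bigl(f_{a}(F)\bigr)=F\cap[a,\one]$, and the lemma follows at once: if $p\in\rsf M$ and $p\geq f_{a}(F)$, then $p\in\rsf M^{f_{a}(F)}=F\cap[a,\one]\subseteq F$.

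The main (and really only) point needing care is the nonemptiness of $S$: were $F\cap[a,\one]$ empty, $f_{a}(F)$ would be the empty meet $\one$ and the statement would be false, so it genuinely matters that the members of $\mathcal D$ are nonempty filters; past that, everything is bookkeeping with order filters and the already-proved properties of $\nu$. If one prefers to avoid $\nu$, there is an equally short direct route: $p\geq f_{a}(F)\geq a$ forces $p\in[a,\one]$, a finite distributive lattice in which $p$ is still meet-irreducible — meets there being inherited from $\mathcal L$ — hence meet-prime; so $p\geq\bigwedge\bigl(F\cap[a,\one]\bigr)$ gives $p\geq m$ for some $m\in F\cap[a,\one]$, and $F$ being up-closed yields $p\in F$.
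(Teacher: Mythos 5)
Your proof is correct, and it is more self-contained than the paper's. The paper disposes of the lemma in one sentence by citing the Birkhoff isomorphism $[a,\one]\cong\mathcal D([a,\one])$ and asserting $\rsf M^{f_{a}(F)}=F\cap[a,\one]$; you actually derive that identity from the machinery already set up, namely by iterating the previously proved fact that $\nu$ turns existing meets into unions over the finite set $F\cap[a,\one]$ and then using that $F$ is an order filter to collapse $\bigcup_{m\in F\cap[a,\one]}\rsf M^{m}$ back to $F\cap[a,\one]$. You are also right to flag nonemptiness of $F\cap[a,\one]$ as the one point of substance: it rests on the paper's convention that $\one\in\rsf M$ and that members of $\mathcal D$ are nonempty (which is what makes $\Set{\one}$ the top of $\mathcal D$), and the paper's own proof silently assumes this. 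Your alternative route --- $p$ lies in the finite distributive lattice $[a,\one]$, is meet-irreducible there since meets are inherited, hence meet-prime, hence lies above some element of $F\cap[a,\one]$, and $F$ is up-closed --- is a genuinely more elementary argument that avoids $\nu$ entirely and would work even if one had not yet verified the representation machinery. Either version is a valid and more informative replacement for the paper's one-line appeal to Birkhoff.
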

\begin{proof}
    We must have $\rsf M^{f_{a}(F)}= F\cap[a, \one]$ as $[a, \one]$ 
    is isomorphic to $\mathcal D([a, \one])$ and this describes that 
    isomorphism. 
\end{proof}

\begin{lem}
    Let $F, G\in\mathcal D$ and $a\in A$. Then
    \begin{align*}
	f_{a}(F\cup G) & = f_{a}(F)\meet f_{a}(G)  \\
	f_{a}(F\cap G) & = f_{a}(F)\join f_{a}(G) .
    \end{align*}
\end{lem}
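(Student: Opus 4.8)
The plan is to establish both identities by combining the previous lemma (which tells us that $\rsf M^{f_a(F)} = F \cap [a,\one]$) with the already-verified facts that $\nu$ preserves joins and all existing meets. Since $f_a(F)$, $f_a(G)$, and $f_a(F\cup G)$ all lie in the interval $[a,\one]$ — a distributive lattice — every meet and join among them exists, so I am free to push the computation through $\nu$, i.e. through $\rsf M^{(-)}$, where the operations become intersection and union of filters intersected with $[a,\one]$.

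First I would treat $f_a(F\cup G) = f_a(F)\meet f_a(G)$. Applying $\rsf M^{(-)}$ to the right-hand side and using that $\nu$ preserves the (existing) meet, one gets $\rsf M^{f_a(F)\meet f_a(G)} = \rsf M^{f_a(F)} \cup \rsf M^{f_a(G)} = (F\cap[a,\one]) \cup (G\cap[a,\one]) = (F\cup G)\cap[a,\one]$; but the last set is exactly $\rsf M^{f_a(F\cup G)}$ by the previous lemma, and since $\nu$ is one-one the two elements coincide. Second, for $f_a(F\cap G) = f_a(F)\join f_a(G)$, the same strategy with the join: $\rsf M^{f_a(F)\join f_a(G)} = \rsf M^{f_a(F)} \cap \rsf M^{f_a(G)} = (F\cap[a,\one])\cap(G\cap[a,\one]) = (F\cap G)\cap[a,\one] = \rsf M^{f_a(F\cap G)}$, and injectivity of $\nu$ finishes it.

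The one point that needs a little care — and what I'd flag as the main obstacle — is making sure every operation I invoke actually exists in $\mathcal L$, so that "$\nu$ preserves meets/joins" genuinely applies. The join $f_a(F)\join f_a(G)$ is unproblematic (joins always exist), but for the meet $f_a(F)\meet f_a(G)$ I must point out that $a$ is a common lower bound of both, so the glb exists; equivalently, all three elements sit in the finite distributive lattice $[a,\one]$. A secondary subtlety is the edge case where $F\cap[a,\one]$ or $G\cap[a,\one]$ is empty, in which case $f_a$ is the empty meet, namely $\one$; one should check the identities degenerate correctly there (e.g. $f_a(F\cup G)$ with $F\cap[a,\one]=\emptyset$ reduces to $f_a(G) = \one\meet f_a(G)$), but this is immediate. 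Once the existence bookkeeping is in place, both lines are just the displayed chains of equalities above, so the proof is short.
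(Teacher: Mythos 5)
Your proposal is correct and is essentially the paper's own argument: the paper's chains of ``iff''s for meet-irreducibles $p$ amount precisely to your computation that $\rsf M^{(-)}$ sends both sides to the same subset of $[a,\one]$, after which injectivity of $\nu$ (equivalently $x=\bigwedge\rsf M^{x}$) gives equality. Your added remarks on the existence of the meet (both elements lie above $a$) and the empty-intersection edge case are sensible bookkeeping that the paper leaves implicit.
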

\begin{proof}
    For the second we have $p\geq f_{a}(F)\join f_{a}(G)$ iff 
    $p\geq f_{a}(F)$ and $p\geq f_{a}(G)$ iff $p\in F\cap[a, \one]$ 
    and $p\in G\cap[a, \one]$ iff $p\in F\cap G\cap[a, \one]$ iff 
    $p\geq f_{a}(F\cap G)$. 
    
    For the first we have $p\geq f_{a}(F)\meet f_{a}(G)$ iff 
    $p\geq f_{a}(F)$ or $p\geq f_{a}(G)$ (as $x\mapsto\rsf M^{x}$ preserves 
    meets) iff $p\in F\cap[a, \one]$ or $p\in G\cap[a, \one]$ iff 
    $p\in(F\cap[a, \one])\cup(G\cap[a, \one])$ iff $p\in (F\cup 
    G)\cap[a, \one]$ iff $p\geq f_{a}(F\cup G)$.
\end{proof}


Lastly we observe that this construction gives a universal inclusion 
into a distributive lattice.
\begin{prop}
    Let $\phi\colon\mathcal L\to\mathcal S$ where $\mathcal S$ is any 
    distributive lattice,  $\phi$ preserves joins,  one and any 
    available meets. Then there is a unique lattice homomorphism 
    $\hat\phi\colon\mathcal D\to\mathcal S$ such that 
    $\phi= \hat\phi\nu$,  ie the following diagram commutes:
    \begin{diagram}
	    \mathcal L && \rTo^{\phi} && \mathcal S\\
	     & \rdTo_{\nu} && \ruTo_{\hat\phi}\\
	     && \mathcal D &&
	\end{diagram}
\end{prop}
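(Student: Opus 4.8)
The plan is to exploit that $\mathcal D$ is generated from the image of $\nu$ by the meet operation, to define $\hat\phi$ by the only formula that can possibly work, and then to verify it is a lattice homomorphism extending $\phi$. The opening observation is that for any filter $F\in\mathcal D$ we have $F=\bigcup_{p\in F}\rsf M^{p}=\bigcup_{p\in F}\nu(p)$: indeed each $\rsf M^{p}=\{m\in\rsf M\mid p\le m\}$ is contained in $F$ because $F$ is up-closed, and $p\in\rsf M^{p}$. Since $\rsf M$ is finite and union is the meet of $\mathcal D$, this says $F=\bigwedge_{p\in F}\nu(p)$ in $\mathcal D$. Hence any lattice homomorphism $\hat\phi$ with $\hat\phi\nu=\phi$ must satisfy $\hat\phi(F)=\bigwedge_{p\in F}\phi(p)$ (meet taken in $\mathcal S$), which already gives uniqueness and dictates the definition. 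So one sets $\hat\phi(F)=\bigwedge_{p\in F}\phi(p)$.

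The first thing to check is $\hat\phi\nu=\phi$: for $x\in\mathcal L$ we get $\hat\phi(\nu(x))=\bigwedge_{m\in\rsf M^{x}}\phi(m)$, and $x=\bigwedge\rsf M^{x}$ by \lemref{lem:wedgeMI}; this meet is available in $\mathcal L$ since every $m\in\rsf M^{x}$ lies above $x$, so all the iterated binary meets exist, and the hypothesis that $\phi$ preserves available meets yields $\bigwedge_{m\in\rsf M^{x}}\phi(m)=\phi(x)$. Preservation of the top and of meets by $\hat\phi$ are then routine: $\hat\phi(\{\one\})=\phi(\one)=\one_{\mathcal S}$, and since the meet of $\mathcal D$ is union, $\hat\phi(F\meet G)=\bigwedge_{p\in F\cup G}\phi(p)=\bigl(\bigwedge_{p\in F}\phi(p)\bigr)\meet\bigl(\bigwedge_{p\in G}\phi(p)\bigr)=\hat\phi(F)\meet\hat\phi(G)$.

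The hard part will be preservation of joins, i.e.\ $\hat\phi(F\cap G)=\hat\phi(F)\join\hat\phi(G)$ — this is the step that genuinely uses distributivity of $\mathcal S$ together with all the hypotheses on $\phi$. I would argue as follows: by distributivity in $\mathcal S$, $\hat\phi(F)\join\hat\phi(G)=\bigl(\bigwedge_{p\in F}\phi(p)\bigr)\join\bigl(\bigwedge_{q\in G}\phi(q)\bigr)=\bigwedge_{p\in F,\,q\in G}\bigl(\phi(p)\join\phi(q)\bigr)$; since $\phi$ preserves joins this is $\bigwedge_{p\in F,\,q\in G}\phi(p\join q)$; applying $\hat\phi\nu=\phi$ (equivalently, that $\phi$ preserves the available meet $p\join q=\bigwedge\rsf M^{p\join q}$) this becomes $\bigwedge_{p\in F,\,q\in G}\bigwedge_{m\in\rsf M^{p\join q}}\phi(m)$, where $\rsf M^{p\join q}=\rsf M^{p}\cap\rsf M^{q}$ because $\nu$ preserves joins. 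So the whole expression is the meet of $\phi(m)$ over all $m\in\rsf M$ lying above some $p\in F$ and some $q\in G$; but because $F$ and $G$ are up-closed subsets of $\rsf M$, such $m$ are precisely the elements of $F\cap G$ (and conversely any $m\in F\cap G$ qualifies, taking $p=q=m$). Hence $\hat\phi(F)\join\hat\phi(G)=\bigwedge_{m\in F\cap G}\phi(m)=\hat\phi(F\cap G)$, which completes the verification that $\hat\phi$ is a lattice homomorphism; uniqueness was already noted. The only point demanding a little care throughout is the passage from preservation of \emph{binary} available meets to preservation of the multi-fold meets $\bigwedge\rsf M^{x}$, which is legitimate since all intermediate meets are taken among elements sharing a common lower bound.
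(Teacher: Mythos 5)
Your proof is correct and follows essentially the same route as the paper: define $\hat\phi(F)=\bigwedge\phi[F]$, use preservation of available meets to get $\hat\phi\nu=\phi$, use the fact that meet in $\mathcal D$ is union for meet-preservation, and use distributivity of $\mathcal S$ plus the identification $\rsf M^{p\join q}=\rsf M^{p}\cap\rsf M^{q}$ for join-preservation. You in fact go slightly further than the paper by spelling out the uniqueness argument (via $F=\bigwedge_{p\in F}\nu(p)$), which the paper leaves implicit.
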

\begin{proof}
    Define $\hat\phi\colon\mathcal D\to\mathcal S$ by 
    $$
	\hat\phi(F)=\bigwedge\phi[F].
    $$
    
    Firstly we see that $\hat\phi\nu(x)= \bigwedge\phi[\rsf M^{x}]= 
    \phi(\bigwedge\rsf M^{x})= \phi(x)$ (as $\phi$ preserves extant meets).
    
    It is clear that $\hat\phi$ preserves order. 
    
    $\hat\phi(F\cup G)= \bigwedge\phi[F\cup G]= 
    \bigwedge(\phi[F]\cup\phi[G])= 
    \bigwedge\phi[F]\meet\bigwedge\phi[G]= \hat\phi(F)\meet\hat\phi(G)$.    
    \begin{multline*}
    \hat\phi(F)\join\hat\phi(G)= 
    \bigwedge\phi[F]\join\bigwedge\phi[G]=
    \bigwedge_{\substack{f\in F\\g\in G}}\phi(f)\join\phi(g)=\\
    \bigwedge_{\substack{f\in F\\g\in G}}\phi(f\join g)= 
    \bigwedge_{\substack{f\in F\\g\in G}}\phi(\bigwedge\rsf M^{f\join 
    g})= 
    \bigwedge_{\substack{f\in F\\g\in G}}\phi(\rsf M^{f\join 
    g}).
\end{multline*}
Now 
    $\hat\phi(F\cap G)=\bigwedge_{p\in F\cap G}\phi(p)$ and the two 
    sets we're taking meets over are the same as if $p\in\rsf M$ then 
    $p\in F\cap G$ iff there are $f\in F$ and $g\in G$ with $p\geq 
    f\join g$. 
    
    Hence $\hat \phi$ is a lattice homomorphism. It might not 
    preserve $\zero$ as we do not know that $\bigwedge\phi[\rsf 
    M]=\zero_{S}$. 
\end{proof}

\section{The Valuation Ring}

Now we turn to another construction,  that of the valuation ring of a 
distributive lattice. We assume,  as above,  that $\mathcal L$ is a 
finite strong upper semilattice that is also distributive.

\begin{defn}
    For any abelian group $\rsf G$,  a function $f\colon\mathcal 
    L\to\rsf G$ is a \emph{valuation} iff for any $x, y, z\in L$
    $$
    f(x\join y\join z)+f((x\join z)\meet(y\join z))= f(x\join 
    z)+f(y\join z).
    $$
\end{defn}

This definition gives the usual definition if applied to a lattice  -- take 
$z=x\meet y$.

We now consider the construction of the valuation ring of $\mathcal 
L$. This parallels the development in \cite{BBR}.

Now let $\Z^{\mathcal L}$ be the free abelian group
over $\mathcal L$. This has a multiplication induced by
$\brk<x, y>\mapsto x\join y$ on $\mathcal L$ and extended by linearity. 

We can define a submodule
$M(\mathcal L)$ via generators $a\join b\join c+(a\join c)\meet 
(b\join c)-a\join c-b\join c$ for all $a, b, c\in\mathcal L$.
This is in fact an ideal of the ring when $\mathcal L$ is distributive -- as $t\in\mathcal L$ and 
$a\join b\join c+(a\join c)\meet 
(b\join c)-a\join c-b\join c\in M(\mathcal L)$ implies 
\begin{align*}
    (a\join b\join c+&(a\join c)\meet 
(b\join c)-a\join c-b\join c)\join t \\
&= (a\join b\join c\join 
t)+((a\join c)\meet(b\join c))\join t
- (a\join c\join t)-(b\join 
c\join t)\\
&= (a\join t)\join (b\join t)\join(c\join t)+((a\join t)\join (c\join 
t))\meet((b\join t)\join(c\join t))\\
&\qquad-(b\join t)\join(c\join t)-(a\join 
t)\join(c\join t)\\
&\in M(\mathcal L).
\end{align*}

On quotienting out by this subgroup we get the valuation ring 
$V(\mathcal L)$.

\begin{rem}
    In lattices,  this reduces to the same as the usual definition 
    for valuations.
\end{rem}

%
%
%
%
%

There is a natural mapping $\iota\colon\mathcal L\to V(\mathcal L)$
given by $x\mapsto x$. This is a valuation by the definition of 
$M(\mathcal L)$. Furthermore it is a universal valuation.
\begin{thm}
    Let $f\colon\mathcal L\to\mathcal G$ be any valuation. Then there is a 
    unique
    group homomorphism $\phi\colon V(\mathcal L)\to\mathcal G$ such 
    that the following diagram commutes:
    \begin{diagram}
	\mathcal L && \rTo^{f} && \mathcal G\\
	 & \rdTo_{\iota} && \ruTo_{\phi}\\
	 && V(\mathcal L) &&
    \end{diagram}
\end{thm}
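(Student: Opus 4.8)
The plan is to use the universal property of the free abelian group $\Z^{\mathcal L}$ and then check that the resulting map factors through the quotient. First I would observe that since $\Z^{\mathcal L}$ is free abelian on the set $\mathcal L$, the function $f\colon\mathcal L\to\mathcal G$ extends uniquely to a group homomorphism $\tilde f\colon\Z^{\mathcal L}\to\mathcal G$ determined by $\tilde f(x)=f(x)$ on basis elements. This $\tilde f$ is the only candidate, so uniqueness (once we know $\phi$ exists) will be automatic: any $\phi$ with $\phi\iota=f$ must agree with $f$ on the generators $x$ of $V(\mathcal L)$, and these generate the whole ring as a group, so $\phi$ is forced.

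Next I would verify that $\tilde f$ kills the submodule $M(\mathcal L)$. By definition $M(\mathcal L)$ is generated as a subgroup by the elements $a\join b\join c+(a\join c)\meet(b\join c)-a\join c-b\join c$ for $a,b,c\in\mathcal L$, so it suffices to check $\tilde f$ vanishes on each such generator; this is exactly the defining identity of a valuation, $f(a\join b\join c)+f((a\join c)\meet(b\join c))=f(a\join c)+f(b\join c)$, rearranged. Hence $M(\mathcal L)\subseteq\ker\tilde f$, and by the universal property of quotient groups $\tilde f$ descends to a unique group homomorphism $\phi\colon V(\mathcal L)=\Z^{\mathcal L}/M(\mathcal L)\to\mathcal G$ with $\phi\circ q=\tilde f$, where $q\colon\Z^{\mathcal L}\to V(\mathcal L)$ is the quotient map. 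Since $\iota=q\restrict\mathcal L$ (the composite of the inclusion $\mathcal L\hookrightarrow\Z^{\mathcal L}$ with $q$), we get $\phi\iota=\phi q\restrict\mathcal L=\tilde f\restrict\mathcal L=f$, so the diagram commutes.

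Finally I would nail down uniqueness directly, rather than merely invoking it: if $\psi\colon V(\mathcal L)\to\mathcal G$ is any group homomorphism with $\psi\iota=f$, then $\psi$ and $\phi$ agree on every $\iota(x)$ for $x\in\mathcal L$, and since these elements generate $V(\mathcal L)$ as an abelian group (they are the images of a generating set of $\Z^{\mathcal L}$), two group homomorphisms agreeing on a generating set are equal, so $\psi=\phi$.

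There is no real obstacle here: the statement is a completely formal consequence of the constructions, and the only thing one must be slightly careful about is that we only claim $\phi$ is a \emph{group} homomorphism — nothing is asserted about multiplicative or lattice structure — so that the defining valuation identity is precisely what is needed and nothing more. The one point worth a sentence is why the generators of $M(\mathcal L)$ suffice: a group homomorphism vanishes on a subgroup iff it vanishes on a generating set of that subgroup, which is immediate from additivity.
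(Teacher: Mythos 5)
Your proposal is correct and follows exactly the paper's argument: extend $f$ linearly to the free abelian group $\Z^{\mathcal L}$, observe that the valuation identity makes this extension vanish on the generators of $M(\mathcal L)$ so it factors through the quotient, and deduce uniqueness from linearity plus agreement on the generators $\iota(x)$. You simply spell out the details (generators suffice for the kernel check; images of $\mathcal L$ generate $V(\mathcal L)$ as a group) that the paper leaves implicit.
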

\begin{proof}
    Define $\Phi\restrict\mathcal L$ to be $f$,  and extend linearly 
    to all of $\Z^{\mathcal L}$. Then $\Phi$ is zero on $M(\mathcal 
    L)$ -- as $f$ is a valuation -- 
    and so factors through $V(\mathcal L)$ to give $\phi$.
    
    Uniqueness is immediate from our definition -- we must have 
    agreement with $f$ and linearity.
\end{proof}

%

\begin{cor}
    Let $f\colon\mathcal L_{1}\to\mathcal L_{2}$ be a homomorphism of 
    suitable type (ie it preserves $\join, \ \one$ and such meets 
    as exist). Then there is a ring homomorphism $V(f)\colon V(\mathcal 
    L_{1})\to V(\mathcal L_{2})$ such that 
    $$
    \begin{diagram}
        \mathcal L_{1} & \rTo^{f} & \mathcal L_{2}  \\
        \dTo^{\iota_{1}} &  & \dTo_{\iota_{2}}  \\
        V(\mathcal L_{1}) & \rTo_{V(f)} & V(\mathcal L_{2})
    \end{diagram}
    $$
    commutes.
\end{cor}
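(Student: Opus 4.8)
The plan is to use the universal property of $V(\mathcal L_1)$ established in the preceding theorem. The key observation is that $\iota_2\circ f\colon\mathcal L_1\to V(\mathcal L_2)$ lands in an abelian group, so the work reduces to checking that this composite is a \emph{valuation} on $\mathcal L_1$; then the theorem hands us a unique group homomorphism $\psi\colon V(\mathcal L_1)\to V(\mathcal L_2)$ with $\psi\circ\iota_1=\iota_2\circ f$, which is exactly the commuting square we want. After that there is the separate point that $V(f)$ should be a \emph{ring} homomorphism, not merely a group homomorphism, which the theorem does not directly give.

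So the first step is to verify the valuation identity for $g:=\iota_2\circ f$. Given $x,y,z\in\mathcal L_1$ we must show
$$
g(x\join y\join z)+g((x\join z)\meet(y\join z))=g(x\join z)+g(y\join z).
$$
Since $f$ is a homomorphism of the stated type, $f$ preserves $\join$ outright, and it preserves the meet $(x\join z)\meet(y\join z)$ because that meet exists in $\mathcal L_1$ (the two elements lie in a common interval, $\mathcal L_1$ being a strong upper semilattice, so their glb exists; this is precisely one of the ``such meets as exist''). Hence the left-hand side equals $\iota_2\bigl(f(x)\join f(y)\join f(z)\bigr)+\iota_2\bigl((f(x)\join f(z))\meet(f(y)\join f(z))\bigr)$, and the right-hand side equals $\iota_2(f(x)\join f(z))+\iota_2(f(y)\join f(z))$; these agree because $\iota_2$ is a valuation on $\mathcal L_2$ (applied to the triple $f(x),f(y),f(z)$). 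So $g$ is a valuation, and the theorem produces the unique group homomorphism $V(f):=\psi$ with $\iota_2\circ f=V(f)\circ\iota_1$, giving commutativity of the diagram.

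It remains to check $V(f)$ is multiplicative. On generators $x,y\in\mathcal L_1$ of $\Z^{\mathcal L_1}$ we compute, using $V(f)\iota_1=\iota_2 f$ and that multiplication in each valuation ring is induced by $\join$: $V(f)(x\cdot y)=V(f)(\iota_1(x\join y))=\iota_2(f(x\join y))=\iota_2(f(x)\join f(y))=\iota_2(f(x))\cdot\iota_2(f(y))=V(f)(x)\cdot V(f)(y)$, where the fourth equality is the ring structure on $V(\mathcal L_2)$ and the third uses that $f$ preserves $\join$. Since $V(f)$ is additive and the products of basis elements generate, bilinearity extends this to all of $V(\mathcal L_1)$; and $V(f)$ sends the identity $\iota_1(\one)$ to $\iota_2(f(\one))=\iota_2(\one)$ since $f$ preserves $\one$. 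Thus $V(f)$ is a ring homomorphism.

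I expect the only subtle point to be the justification that the meet $(x\join z)\meet(y\join z)$ exists in $\mathcal L_1$ and is therefore preserved by $f$ --- everything else is a routine application of the universal property and of linearity. (Uniqueness of $V(f)$, if wanted, is inherited from the uniqueness clause of the theorem.)
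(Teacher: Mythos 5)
Your proof is correct and takes essentially the same approach as the paper's: the paper's one-line proof simply observes that $\iota_{2}\circ f$ is a valuation on $\mathcal L_{1}$ and invokes the universal property, deferring the ring-homomorphism check with the remark that it is ``not hard to see.'' You supply the details the paper omits (the existence of the meet $(x\join z)\meet(y\join z)$ via the common lower bound $z$, and the multiplicativity check on generators), and these details are right.
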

\begin{proof}
    We note that $\iota_{2}f$ is a valuation on $\mathcal L_{1}$ and 
    so the theorem gives the function $V(f)$.
\end{proof}

This corollary shows that $V$ is functorial. It is not hard to see 
that $V(f)$ is actually a ring homomorphism.

We will assume that the reader knows the basics of valuation theory 
on lattices in what follows. A good reference is 
\cite{GeiVal1}.

Some properties of $\iota$ are now evident. As constant functions are 
valuations we see that if $g\in\mathcal G$ is any element then 
$c_{g}\colon\mathcal L\to\mathcal G$ is a valuation and so for any 
$l\in\mathcal L$ we have that the order of $\iota(l)$
is a multiple of the order of $\phi_{c}\iota(l)=g$. Hence $\iota(l)$ 
has infinite order. 
 
We also see that if $\mathcal L$ is a distributive lattice then any 
two elements may be separated by a prime filter $\rsf F$. The 
characteristic function of $\rsf F$ into $\Z_{2}$ is a valuation and 
the two elements under consideration have distinct values. Hence they 
must be separated by $\iota$. 

\begin{prop}\label{prop:oneOne}
    Let $\mathcal L$ be a strong upper semilattice with $\one$ that 
    is locally principal,  ie there is a set $A$ of elements that are 
    all minimal in $\mathcal L$ such that $\mathcal L=\bigcup_{a\in 
    A}[a, \one]$. 
    
    Then $\iota$ is one-one on $\mathcal L$.
\end{prop}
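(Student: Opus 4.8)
The plan is to separate any two distinct points of $\mathcal{L}$ by a $\Z_{2}$-valued valuation and then invoke the universal property of $V(\mathcal{L})$. First I would reduce to the case of two points in a common interval $[b,\one]$ with $b\in A$. Given $x\neq y$, choose $b\in A$ with $b\le x$, so that $x=x\join b\in[b,\one]$. If $x\join b\neq y\join b$ in the distributive lattice $[b,\one]$ we keep this $b$; otherwise $x=x\join b=y\join b\ge y$, hence $y<x$, and choosing instead $b'\in A$ with $b'\le y$ we get $x>y\ge b'$, so $x,y\in[b',\one]$ with $x\neq y$ and $x\join b'=x\neq y=y\join b'$. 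Either way we obtain $b\in A$ with $x\join b\neq y\join b$ in the distributive lattice $[b,\one]$.

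Now, using that two distinct elements of a distributive lattice are separated by a prime filter, pick a prime filter $F$ of $[b,\one]$ with $x\join b\in F$, $y\join b\notin F$, and let $\chi_{F}\colon[b,\one]\to\Z_{2}$ be its characteristic function; this is a valuation on the lattice $[b,\one]$. Define $g\colon\mathcal{L}\to\Z_{2}$ by $g(l)=\chi_{F}(l\join b)$. Then $g(x)=\chi_{F}(x\join b)\neq\chi_{F}(y\join b)=g(y)$, so it suffices to show that $g$ is a valuation on $\mathcal{L}$: the universal property of $\iota$ then yields a group homomorphism $\phi\colon V(\mathcal{L})\to\Z_{2}$ with $\phi\iota=g$, whence $\iota(x)\neq\iota(y)$.

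To verify that $g$ is a valuation, fix $x,y,z\in\mathcal{L}$, put $p=x\join z$ and $q=y\join z$ (so $p\meet q$ exists, since $x\join y\join z$ is a common upper bound), and set $u=p\join b$, $v=q\join b\in[b,\one]$. Then $(x\join y\join z)\join b=u\join v$, and granting the identity $(p\meet q)\join b=(p\join b)\meet(q\join b)$ we also get $((x\join z)\meet(y\join z))\join b=u\meet v$. The required equation
\[
g(x\join y\join z)+g((x\join z)\meet(y\join z))=g(x\join z)+g(y\join z)
\]
then becomes $\chi_{F}(u\join v)+\chi_{F}(u\meet v)=\chi_{F}(u)+\chi_{F}(v)$, which holds because $\chi_{F}$ is a valuation on the lattice $[b,\one]$.

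The one real point, which I expect to be the main obstacle, is the identity $(p\meet q)\join b=(p\join b)\meet(q\join b)$, i.e.\ that $l\mapsto l\join b$ respects this extant meet. The inequality $(p\meet q)\join b\le(p\join b)\meet(q\join b)$ is immediate from monotonicity. For the reverse, let $w=(p\join b)\meet(q\join b)$, which exists since $b$ is a common lower bound of $p\join b$ and $q\join b$; as $p\meet q$ is also a common lower bound of these two elements, we get $w\ge b\join(p\meet q)=:b'$, so that $p,q,b',w$ all lie in the distributive interval $[p\meet q,\one]$. There $p\join b'=p\join b$ and $q\join b'=q\join b$, and the distributive identity $(r\join t)\meet(s\join t)=(r\meet s)\join t$ applied with $r=p$, $s=q$, $t=b'$ gives $w=(p\meet q)\join b'=(p\meet q)\join b$ — using that a greatest lower bound computed in $\mathcal{L}$ which lies above $b$ and $p\meet q$ is also the greatest lower bound in the relevant sublattice. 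This completes the verification, and with it the proof.
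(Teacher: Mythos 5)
Your proof is correct, but it routes around the paper's argument rather than reproducing it. The paper works at the level of the valuation rings themselves: it establishes $M([a,\one])=M(\mathcal L)\cap\Z^{([a,\one])}$, obtains an injective homomorphism $\vartheta_{a}\colon V([a,\one])\to V(\mathcal L)$ together with a retraction $\varphi_{a}$ induced by the valuation $j_{a}(y)=\iota_{a}(y\join a)$, and then deduces injectivity of $\iota$ from the previously noted injectivity of $\iota_{a}$ on the distributive lattice $[a,\one]$. You instead go straight to concrete separating $\Z_{2}$-valued valuations: choose $b\in A$ with $x\join b\ne y\join b$, separate these by a prime filter of $[b,\one]$, and pull its characteristic function back along $l\mapsto l\join b$. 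The two arguments share their real content, namely that composing a valuation on $[b,\one]$ with $J_{b}\colon l\mapsto l\join b$ yields a valuation on $\mathcal L$; both reduce this to the identity $((x\join z)\meet(y\join z))\join b=(x\join z\join b)\meet(y\join z\join b)$, which the paper asserts inside the computation for $j_{a}$ (justified only afterwards by the remark that $J_{a}$ is a lattice homomorphism), whereas you prove it carefully by working inside the interval $[(x\join z)\meet(y\join z),\one]$ --- exactly where the ``strong'' and ``locally distributive'' hypotheses enter, so this care is welcome. What your version buys is that you never need $\vartheta_{a}$ or the identification $M([a,\one])=M(\mathcal L)\cap\Z^{([a,\one])}$, which is the most delicate point of the paper's proof; what it costs is only the prime-filter separation in $[b,\one]$, available here by finiteness (or Birkhoff--Stone in general). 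One small slip: $(x\join z)\meet(y\join z)$ exists because $z$ is a common \emph{lower} bound of the two joins, not because $x\join y\join z$ is a common upper bound --- upper bounds always exist in an upper semilattice and play no role in the definition of a strong one.
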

\begin{proof}
    The proof proceeds by considering the distributive lattices $[a, 
    \one]$ for $a\in A$ and the mappings $\iota_{a}\colon[a, \one]\to 
    V([a, \one])$,  which we already know to be one-one. 
    
    Now we have $\Z^{([a, \one])}\subseteq\Z^{(\mathcal L)}$ and if
    $t$ is a generator of $M([a, \one])$ then it is also in 
    $M(\mathcal L)$. Furthermore if $s$ is a generator of $M(\mathcal 
    L)$ with $s\in \Z^{([a, \one])}$ then the freeness of the module 
    implies $s$ is a generator for $M([a, \one])$. Thus we have 
    $$
	M([a, \one])=M(\mathcal L)\cap\Z^{([a, 1])}.
    $$
    This means we have an injective homomorphism $\vartheta_{a}\colon 
    V([a, \one])\to V(\mathcal L)$ defined by 
    $$
	\vartheta_{a}([x]_{a})=[x]_{\mathcal L}
    $$
    (the subscripts indicate where the equivalance class lives).
    We have the following properties of $\vartheta_{a}$:
    \begin{description}
        \item[Well-defined \& one-one] as $c\sim_{a}d$ iff $c-d\in 
	M([a, \one])$ iff $c-d\in M(\mathcal L)\cap\Z^{([a, \one])}$ 
	iff $c\sim_{\mathcal L}d$.
    
        \item[Preserves $+$] as the operation $+$ is the same in 
	$\Z^{([a, \one])}$ as in $\Z^{(\mathcal L)}$.
    
        \item[Preserves $\times$] as $[a, \one]$ is a sub-semilattice 
	of $\mathcal L$ and so multiplication in $\Z^{([a, \one])}$
	is the same as in $\Z^{(\mathcal L)}$.
    \end{description}
    
    Now define a function $j_{a}\colon\mathcal L\to V([a, \one])$ by
    $j_{a}(y)=\iota_{a}(y\join a)$. Then $j_{a}$ is a valuation -- 
    as 
    \begin{align*}
        j_{a}(x\join y\join z)&+j_{a}((x\join z)\meet(y\join z)) \\
	&= \iota_{a}(x\join y\join z\join a)+\iota_{a}([(x\join 
	z)\meet(y\join z)]\join a)\\
         & = \iota_{a}((x\join a)\join(y\join a)\join(z\join a))+
	 \iota_{a}((x\join z\join a)\meet(y\join z\join a))\\
         & = \iota_{a}((x\join a)\join(y\join a)\join(z\join a))+
	 \iota_{a}((x\join a)\join (z\join a))\meet((y\join a)\join (z\join a)))\\
	 & = \iota_{a}((x\join a)\join(z\join a))+\iota_{a}((y\join 
	 a)\join(z\join a))\\
	 &= \iota_{a}(x\join z\join a)+\iota_{a}(y\join z\join a)\\
	 &= j_{a}(x\join z)+j_{a}(y\join z).
    \end{align*}
    
    Suppose that $\iota_{\mathcal L}(x)=\iota_{\mathcal L}(y)$.
    
    Let $\varphi_{a}\colon V(\mathcal L)\to V([a, \one])$ be such that 
    $j_{a}=\varphi_{a}\iota_{\mathcal L}$. Then $\iota_{\mathcal L}(x)=\iota_{\mathcal L}(y)$
    implies $j_{a}(x)=j_{a}(y)$ and so $\iota_{a}(x)= \iota_{a}(x\join 
    a)= \iota_{a}(y\join a)$. As $\iota_{a}$ is one-one we have 
    $x=y\join a$. Likewise $y=x\join b$. And so 
    $y= x\join b= y\join a\join b= y\join a\geq a$. (and $x\geq b$.)
    
    But now $\vartheta_{a}\iota_{a}(x)= \iota_{\mathcal L}(x)= 
    \iota_{\mathcal L}(y)= \vartheta_{a}\iota_{a}(y)$ implies
    $x=y$ -- as both $\vartheta_{a}$ and $\iota_{a}$ are one-one.
\end{proof}

In this result we have a lattice homomorphism $\text{incl}\colon[a, 
\one]\to\mathcal L$ and it's clear that $\vartheta_{a}=V(\text{incl})$. 
Because $\mathcal L$ is distributive,  the mapping 
$J_{a}\colon x\mapsto x\join a$ is a lattice homomorphism from $\mathcal L\to[a, 
\one]$ and $\varphi_{a}=V(J_{a})$. Also $\text{incl}_{\circ} 
J_{a}=\text{id}_{a}$ and so 
$\varphi_{a}\vartheta_{a}=V(\text{id}_{a})=\text{id}$ and so we 
actually have a retract. This gives a kind of localization on 
$V(\mathcal L)$ that is extremely useful.

\subsection{The envelope}
Now that we have the valuation ring we can define a new partial 
operation that produces our envelope. 

We begin by defining
$$
x\meet y = x+y-x\join y
$$
and extending for any set $X\subseteq \mathcal L$ by 
letting $X_{(m)}=\Set{x | x\text{ is minimal in }X}$.
$$
\bigwedge X= \sum_{\substack{B\subset X_{(m)}\\B\not=X_{(m)}}}(-1)^{\card 
B+1}\bigvee B.
$$
The envelope is the range of this operation. We want to show that 
this indeed produces a distributive lattice.

Rather than go through the tedious calculations involved we consider 
the relationship between the valuation ring of $\mathcal L$ and the 
valuation ring of the distributive envelope constructed above,  ie 
$V(\mathcal D)$. 

\section{The two envelopes are equivalent}
We want to consider how to extend valuations on $\mathcal L$ to 
valuations on $\mathcal D$. Of course we need only extend the 
universal valuation $\iota$. The most natural method is to equate a 
filter $F\in\mathcal D$ with its ``virtual meet'' $\bigwedge F= 
\bigwedge_{a\in A}f_{a}(F)$ and 
use inclusion-exclusion to define
$$
i(F)=\sum_{\substack{B\subseteq A\\
\card B\geq 1}}(-1)^{\card B+1}\bigvee_{b\in B}f_{b}(F).
$$
However it might be simpler to just use all of $F$ and define it as 
$$
j(F)=\sum_{\substack{H\subseteq F\\
\card H\geq 1}}(-1)^{\card H+1}\bigvee H.
$$
The calculations involved in checking this are not easy and so we 
argue indirectly that this method must work.

We know that valuations on a distributive 
lattice are completely 
determined by their values on the set of meet-irreducibles and $\one$ 
(because these form a basis).
Furthermore we know that (for distributive lattices) that we can 
define a valuation from any set of values on the meet-irreducibles 
and $\one$ (same reason). 
Putting these together means that we can start with a valuation on 
$\mathcal L$,  lift to one on $\mathcal D$ by using its values on 
$\rsf M(\mathcal L)= \rsf M(\mathcal D)$,  and restrict to $\mathcal 
L$. 

Now the inclusion/exclusion lemma holds true in both $V(\mathcal L)$
and in $V(\mathcal D)$.

\begin{lem}\label{lem:ie}
    Let $x_{1}, \dots, x_{n}\in\mathcal L$ be such that 
    $\bigwedge_{i}x_{i}$ exists. Then
    $$
    \bigwedge_{i}x_{i}=\sum_{\substack{J\subseteq\Set{1, \dots, i}\\
    \card J>0}}(-1)^{\card J+1}\bigvee_{j\in J}x_{j}.
    $$
\end{lem}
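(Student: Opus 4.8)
The plan is to prove the inclusion–exclusion identity by induction on $n$, working inside the valuation ring $V(\mathcal L)$ where $\bigwedge_i x_i$ is interpreted via $\iota$ and the partial meet operation. The base case $n=1$ is trivial, and $n=2$ is exactly the defining relation $x\meet y=x+y-x\join y$ in $V(\mathcal L)$, which holds because $M(\mathcal L)$ contains the generator obtained by taking $a=x$, $b=y$, $c=x\meet y$ (so that $(x\meet y)\join x=x$ etc.). For the inductive step I would write $\bigwedge_{i=1}^n x_i=(\bigwedge_{i=1}^{n-1}x_i)\meet x_n$, being careful first to check that the meet on the right exists: since $\bigwedge_i x_i$ is a lower bound of $x_1,\dots,x_{n-1}$, it witnesses that this partial glb exists, and it is $\le x_n$, so the binary meet is legitimate.

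The key algebraic manoeuvre is then to combine the $n=2$ case with the distributive law in the form $y\meet(x_1\join\dots\join x_{n-1})$ — more precisely, I would apply the binary relation to $u=\bigwedge_{i<n}x_i$ and $x_n$, getting $\bigwedge_{i\le n}x_i=u+x_n-(u\join x_n)$, and then substitute the inductive formula for $u$ as a signed sum of joins $\bigvee_{j\in J}x_j$ over nonempty $J\subseteq\{1,\dots,n-1\}$. For the term $u\join x_n$ I need that joining with $x_n$ distributes over the signed sum; this is the step where distributivity of $\mathcal L$ is used, via the relations $(x\join z)\meet(y\join z)$ appearing in the definition of $M(\mathcal L)$, exactly as in the displayed computations earlier showing $M(\mathcal L)$ is an ideal. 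Concretely, $\bigl(\bigvee_{j\in J}x_j\bigr)\join x_n=\bigvee_{j\in J\cup\{n\}}x_j$, so after substitution the three pieces $u$, $x_n$, and $-(u\join x_n)$ reorganise into the full signed sum over all nonempty $J\subseteq\{1,\dots,n\}$: the $J$ not containing $n$ come from $u$, the singleton $\{n\}$ comes from $x_n$, and the $J$ properly containing $n$ (with at least one other index) come from $-(u\join x_n)$, with signs matching because $\card(J\cup\{n\})=\card J+1$.

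The main obstacle I anticipate is bookkeeping the signs and confirming that every nonempty subset of $\{1,\dots,n\}$ is hit exactly once with the correct sign $(-1)^{\card J+1}$; in particular one must check the sign flip introduced by the leading minus on $u\join x_n$ cancels correctly against the parity change from adjoining the index $n$. A secondary point requiring care is that all of this is an identity in $V(\mathcal L)$ (or equally in $V(\mathcal D)$, since the same formal manipulation is available there using the corresponding generators of $M(\mathcal D)$), not in $\mathcal L$ itself; the statement of the lemma is about the element $\bigwedge_i x_i\in\mathcal L$ whose image under $\iota$ equals the signed sum, and since $\iota$ is one-one under the hypotheses in force (\propref{prop:oneOne}, as $\mathcal L$ is locally principal here), the identity in $V(\mathcal L)$ pins down the element of $\mathcal L$ uniquely. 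So the proof reduces to: (i) the $n=2$ defining relation, (ii) existence of the intermediate meets, (iii) distributivity to push $\join x_n$ through the sum, and (iv) the combinatorial recount of subsets — with (iv) being the only genuinely fiddly part, and even it is the standard inclusion–exclusion recursion.
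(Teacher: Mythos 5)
Your proof is correct, but it takes a genuinely different route from the paper's. The paper's argument is a one-line localization: since $\bigwedge_i x_i$ exists, all the $x_i$ lie in some interval $[a,\one]$ with $a$ minimal, which is a finite distributive \emph{lattice}; the inclusion--exclusion identity for the universal valuation is standard there (this is the classical valuation-ring fact from Geissinger/Rota that the paper assumes), and it transfers to $V(\mathcal L)$ because $V([a,\one])$ embeds in $V(\mathcal L)$ via the injective map $\vartheta_a$ established in \propref{prop:oneOne}. You instead prove the identity from scratch by induction on $n$ inside $V(\mathcal L)$: the base case is the generator of $M(\mathcal L)$ with $c=x\meet y$, and the inductive step uses that multiplication (i.e.\ $\join$) descends to $V(\mathcal L)$ because $M(\mathcal L)$ is an ideal, so that $u\join x_n$ can be expanded term by term as $\bigvee_{j\in J\cup\{n\}}x_j$; the sign bookkeeping you describe is right, since $-(-1)^{\card J+1}=(-1)^{\card{(J\cup\{n\})}+1}$. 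Your version is self-contained and does not invoke the known distributive-lattice result, at the cost of the combinatorial recursion and of having to verify the existence of the intermediate meets $\bigwedge_{i<n}x_i$ (which you correctly do via the common lower bound $\bigwedge_{i\le n}x_i$); the paper's version is shorter given the $\vartheta_a$ machinery already in place, and makes clearer \emph{why} the lemma is really a statement about the distributive intervals of $\mathcal L$. Your closing remark about $\iota$ being one-one is not actually needed: the identity asserted is an equation in $V(\mathcal L)$ between the image of $\bigwedge_i x_i$ and the signed sum, not an equation in $\mathcal L$.
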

\begin{proof}
    Let $a\le\bigwedge_{i}x_{i}$ be minimal. Then it holds in $V([a, 
    \one])$ which is a subalgebra of $V(\mathcal L)$ and so it holds 
    in $V(\mathcal L)$.
\end{proof}

By this inclusion/exclusion formula in $V(\mathcal L)$ the 
restriction must be same as the valuation we started 
with. This works both ways:
as if $f$ is a valuation on $\mathcal D$ then it restricts to one on 
$\mathcal L$.
Thus the valuation rings must be the 
same -- ie this mapping makes both $V(\mathcal D)$ and $V(\mathcal 
L)$ universal valuations for $\mathcal D$ and so they are isomorphic. 

Thus either of the definitions above must work,  and be the same. 
In particular,  if $X\subseteq\mathcal L$ and $F_{X}$ is the filter 
generated by $X$ then 
$\bigwedge X= F_{X}$ in $\mathcal D$ and 
we see that $X$ and $F_{X}$ have the same set of minimal elements 
$X_{(m)}$. Therefore $\bigwedge X_{(m)}= F_{X}$ in $\mathcal D$ also.
Then
$$
    \bigwedge X = 
    \sum_{\substack{B\subset X_{(m)}\\B\not=X_{(m)}}}
    (-1)^{\card B+1}\bigvee B  
      = \bigwedge F_{X}  
      = F_{X}.
$$
Thus the envelope we defined above is exactly $\mathcal D$. This 
gives us our 
alternative representation for $\mathcal D$.

%
%
%
%
%

\begin{bibdiv}
\begin{biblist}
    \bib{Abb}{article}{  
author={Abbott, J.C.}, 
title={Algebras of implication and semi-lattices},  
journal={S\'eminaire Dubreil. Alg\`ebre et th\'eorie des nombres}, 
volume={20}, 
pages={1--8}
}

\bib{Abb:bk}{book}{
author={Abbott,J.C.}, 
title={Sets, Lattices, and Boolean Algebras}, 
publisher={Allyn and Bacon, Boston, MA}, 
date={1969}
}

\bib{Chajda:2005p59}{article}{
author={Chajda, I.}, 
author={Hala\v s,  R.}, 
author={K\"uhr,  J}, 
title={Implication in MV-algebras}, 
journal={Algebra Universalis}, 
volume={52}, 
date={2004}, 
pages={377Ð-382}
}

\bib{BBR}{article}{
title={The theory of M\"obius functions}, 
author={Barnabei, M.}, 
author={Brini, A.}, 
author={Rota,  G.-C.}, 
journal={Uspekhi Mat. Nauk.}, 
volume={41}, 
date={1986}, 
pages={113--157}
}

\bib{GeiVal1}{article}{
title={Valuations on Distributive Lattices I}, 
author={Geissinger, Ladnor}, 
journal={Arch.Math.}, 
volume={24}, 
date={1973}, 
pages={230--239}
}

%
\end{biblist}
\end{bibdiv}

\end{document}